\documentclass[10pt]{article}

\usepackage{amsmath} \usepackage{amsfonts}
\usepackage{mathrsfs}  \usepackage{amsthm}
\usepackage[normalem]{ulem}   \usepackage{enumerate}
\usepackage{graphicx}  \usepackage{epstopdf}
\usepackage{subfigure}  \usepackage{lineno}
\usepackage{multirow,bm}
\usepackage{color}
\usepackage{indentfirst}
\usepackage{algorithm}
\usepackage{algorithmic}

\usepackage{tabularx}
\newtheorem{proposition} {Proposition}[section]

\newtheorem{theorem}{Theorem}[section]
\newtheorem{lemma}{Lemma}[section]

\theoremstyle{definition}
\newtheorem{definition}{Definition}[section]
\newtheorem{remark}{Remark}[section]

\setlength{\baselineskip}{12pt plus2pt minus1pt}

\setlength{\textheight}{21.5true cm} \setlength{\textwidth}{14.5true cm}
 \newcommand{\nn}{\nonumber}
\def \[{\begin{equation}} \def \]{\end{equation}}

\def \diag{\mbox{diag}}

\begin{document}

\title{An  Inexact Manifold Augmented Lagrangian Method for Adaptive Sparse Canonical Correlation Analysis with Trace Lasso Regularization\thanks{This research was supported by  the Natural Science Foundation of China (NSFC 11571074, 61672005).}}
\date{ }

\author{Kang-Kang Deng $^\ddag$, ~~Zheng Peng \footnote{School of Mathematics and Computational Science, Xiangtan University, Xiangtan 411105, China. Corresponding author, E-mail: pzheng@xtu.edu.cn. }  \footnote{College of Mathematics and Computer Science, Fuzhou University,  Fuzhou  350108,   China. }
 }

\maketitle

\noindent {\textbf{Abstract:}   Canonical correlation analysis (CCA for short) describes the relationship between two sets of variables by finding some linear combinations of these variables that maximizing the correlation coefficient. However, in high-dimensional settings where the number of variables exceeds   sample size, or in the case of that the variables are highly correlated, the traditional CCA is no longer appropriate. In this paper,  an adaptive sparse version of CCA (ASCCA for short) is proposed by  using the trace Lasso regularization. The proposed ASCCA reduces the  instability of the estimator when the covariates are highly correlated,  and thus improves its interpretation.  The ASCCA   is further reformulated to an optimization problem on Riemannian manifolds, and an manifold inexact augmented Lagrangian method is then proposed for   the resulting  optimization problem.  The performance of the  ASCCA  is compared with the other sparse CCA techniques in different simulation settings, which illustrates that the ASCCA is feasible and efficient.

\medskip
\noindent\textbf{Keywords: } Canonical correlation analysis;  Sparsity; Trace Lasso regularization; Manifold constrained optimization;  Inexact manifold augmented Lagrangian method.

\noindent\textbf{Mathematics Subject Classification:  } 65F15, 65K05, 90C50 

\numberwithin{equation}{section}

\section{Introduction}

Canonical correlation analysis (CCA for short)   firstly proposed by Hotelling \cite{Hotelling1992Relations}  aims to tackle the associations between two sets of variables. It   has wide applications in many important  fields such as  biology \cite{Witten2009Extensions, Parkhomenko2008Sparse, Lin2014Correspondence}, medicine \cite{Correa2008Canonical},  image analysis \cite{Fu2008Image, Loog2004Dimensionality}, etc.

Suppose that there are two data sets:  $X\in\mathbb{R}^{n\times p}$ containing $p$ variables and $Y\in\mathbb{R}^{n\times q}$ containing $q$ variables, both are obtained from $n$ observations.  The CCA seeks two linear combinations of these variables from   $X$ and $Y$ with maximal  correlation coefficient. Specially, let $$\Sigma_{xx} = \frac{1}{n}X^TX, \Sigma_{yy} = \frac{1}{n}Y^TY$$ be the sample covariance matrices of $X$ and $Y$ respectively, and $\Sigma_{xy} = \frac{1}{n}X^TY$ be the sample cross-covariance matrix,  then the CCA finds a pair $(u, v)$  such that
\begin{equation}\label{eq:def}
\begin{split}
\mbox{corr}(Xu,Yv)=\frac{u^T\Sigma_{xy}v}{\sqrt{u^T\Sigma_{xx}u}\sqrt{v^T\Sigma_{yy}v}}
\end{split}
\end{equation}
is maximized.  The new variables $u$ and $v$ are  canonical variables,  and the correlations between   canonical variables are called canonical correlations. The canonical variables  $u$  and $v$ can be respectively obtained by the eigenvectors of  matrix
$$\Sigma_{xx}^{-1}\Sigma_{xy}\Sigma_{yy}^{-1}\Sigma_{yx}.$$
The canonical correlations are given by the positive square root of those eigenvalues.
Since the  CCA model \eqref{eq:def} is in form of fractional, it is difficult to optimize. A equivalent formulation of CCA is given by
\[\label{originalCCA}
\left\{\begin{split}
\max_{u,v}& ~ u^T\Sigma_{xy} v \\
\mbox{s.t. } & u^T\Sigma_{xx} u = 1, v^T\Sigma_{yy} v = 1,
\end{split}\right.
\]
which can be regarded  as an optimization problem on the generalized Stiefel manifolds.  However, a potential disadvantage of the CCA is that, the learned solution  is a linear combination of all  original variables,  which brings down the interpretability. If  the  number of variables   exceeds   sample size,  traditional CCA cannot be performed due to that $\Sigma_{xx}$ and $\Sigma_{yy}$ are singular.  Hence, many researchers proposed various sparse CCA (SCCA) to handle the case that the number of variables  exceeds   observations, and to improve  the interpretability of   canonical variables by restricting the linear combinations to a subset of  original variables.

In this paper, we  propose an adaptive sparse CCA model by incorporating the trace Lasso regularization. The  matrix version of trace Lasso regularization  can be adopted to   both highly correlated and uncorrelated data.  Our major contributions are summarized in follows:
\begin{enumerate}
  \item[1)] We present a matrix version of trace Lasso regularization, and show that the new regularization function enjoys the properties of original trace Lasso.
  \item[2)] By introducing trace Lasso regularization into the CCA model, we obtain an adaptive sparse CCA model (ASCCA). To our knowledge, our ASCCA is the first to takes the data correlation into account in the CCA model. In addition, our model consider multiple variables simultaneously.
  \item[3)] The new model is reformulated to  an optimization problem on the generalized Stiefel manifold. An manifold inexact augmented Lagrangian method is proposed for the resulted optimization problem, and the convergence is established under some assumptions. 
  \item[4)] The experimental results demonstrate that, the proposed ASCCA is superior to some existing sparse CCA models.
\end{enumerate}

The rest of the paper is organized as follows. Section \ref{related} briefly  gives some reviews on the related works.  Section \ref{sec-trace} proposes an adaptive sparse version of the CCA introduced by  the new trace Lasso regularization. Section \ref{sec-opt} provides an optimization reformulation and the manifold inexact augmented Lagrangian  method for the new model, and gives the convergence analysis. In Section \ref{sec-simu},  a simulation study is provided to show the validity  and efficiency of the proposed method.  Section \ref{sec-con} concludes this paper with some final remarks.
\section{Related works}\label{related}

It is well known that, if the sample size exceeds dimension, the traditional CCA does not perform. To overcome this difficulty, various methods were proposed via incorporate different regularization function.
Vinod \cite{Vinod2006Canonical} proposed a canonical ridge, which is an adaptation of  the ridge regression for the CCA framework proposed by Hoerl and Kennard \cite{Hoerl2000Ridge}, and introduced an efficient sparsity penalty strategy.  After that, various approaches for  sparse CCA (SCCA for short)  were proposed in  literature, which includes $\ell_1$ regularization\cite{Parkhomenko2009Sparse,Witten2009A}, elastic net \cite{Waaijenborg2007Quantifying}, group sparse and structured sparse \cite{Lin2013Group,Chen2012Structured},  etc. There also exists some limitations.  If there is a group of variables which the pairwise correlation is high, the Lasso tends to select only one variable from this group, which may lead  some misunderstands to the truth. Group sparse regularization needs the prior knowledge of group, which is unrealistic in some real applications. The proposed   adaptive sparse CCA model utilized the new trace Lasso regularization, which incorporates data matrix into regularization, to adaptively deal with the correlation of covariation matrix.


  The original SCCA model is difficult to handle, so many researchers simplify it by assuming that $\Sigma_x$ and $\Sigma_y$ are diagonal matrices or identity matrices. Parkhomenko, et al  \cite{Parkhomenko2009Sparse} assume that,  the covariance matrices $\Sigma_{xx},\Sigma_{yy} $ are the identity matrices, and used a sparse singular value decomposition to derive sparse singular vectors. Wilms and Croux \cite{Wilms2015Sparse} converted the SCCA model into a penalized regression framework. Suo \cite{suo2017sparse}  presented an approximated  SCCA model as follows
  \[\label{relax1}\left\{
  \begin{split}
  \min_{u,v} & -u^T\Sigma_{xy}v + \lambda_1\|u\|_1 + \lambda_2 \|v\|_1 \\
  \mbox{s.t. } ~ & u^T\Sigma_{xx}u \leq 1, v^T\Sigma_{xx}v \leq 1,
  \end{split}\right.
  \]
  and   problem \eqref{relax1} was solved  by a  linearized alternating direction method of multipliers (LADMM).  Witten \cite{Witten2009A} further relaxed \eqref{relax1}  to
 \[\label{relax2}\left\{
 \begin{split}
 \min_{u,v} & -u^T\Sigma_{xy}v \\
 \mbox{s.t. } & \|u\|_2^2 \leq 1,\|v\|_2^2 \leq 1,P_1(u)\leq c_1, P_2(v) \leq c_2,
 \end{split}\right.
 \]
 where $P_1(u)$ and $P_2(v)$ are some regularizations for sparsity,  and then developed a penalized matrix decomposition algorithm to solve model \eqref{relax2}.
Focusing on a sparse version of the original CCA model \eqref{originalCCA},
Gao \cite{gao2017sparse}  proposed a two-stage method by a convex relaxation of  CCA model.  For the matrix case, many researchers  adopted the residual model to obtain the high-order canonical variables \cite{Parkhomenko2009Sparse,Witten2009A,suo2017sparse}.  In this paper,  we  get  multiple variables simultaneously in our new model. In addition, all results on the matrix case mentioned above  have not given convergence analysis for their algorithms, we proposed an efficient method to solve our new model, and provided the convergence analysis.

The original trace Lasso was proposed by Grave \cite{Grave2011Trace}.  Trace Lasso regularization was successful applied to various scenarios  including subspace clustering \cite{wang2015robust}, sparse representation classification \cite{wang2014robust} and subspace segment \cite{lu2013correlation}, and so on. However, they only considered the  trace Lasso regularization in vector case in literature. In this paper, we generalize the original trace Lasso regularization to   matrix case,  and adopt it as a new regularization for  the SCCA,   and get an adaptive SCCA model.
\paragraph{Notations:}
We use capital and lowercase symbols to represent matrix and vector, respectively. Let $1_d\in\mathbb{R}^d$ denote  the vector of all 1's, $e_i$ be a vector whose $i$-th entry is 1 and 0 for others, $\mbox{Diag}(w)$ is a diagonal matrix where the $i$-th diagonal entry is $w_i$, and $\mbox{diag}(W)$ be a vector where the $i$-th entry is $W_{ii}$. Let $W_{i\cdot}$ and $W_{\cdot j}$ denote the $i$-th row and $j$-th column of $W$,  $tr(W)$ be the trace of $W$. For a vector $w$, let $\|w\|_1,\|w\|_2$ be the $\ell_1$ and $\ell_2$ norm. For a matrix $W\in\mathbb{R}^{n\times p}$, let  $\|W\|_{2,1} = \sum_{i=1}^{n}\|W_{i\cdot}\|_2$ be the  $\ell_{2,1}$ norm, $\|W\|_F$ and $\|W\|_*$ denote the Frobenius norm and nuclear norm respectively, $\|W\|_{op}$ denote the operator norm.

\section{Adaptive sparse CCA using trace Lasso regularization}\label{sec-trace}
\subsection{Trace Lasso in vector case}

Consider the following linear estimator:
\[
\min_w \frac{1}{2}\|Xw - y\| + \lambda \Omega(w)
\]
 where $X\in\mathbb{R}^{n\times p}$ is a data matrix. The trace Lasso  is  a correlation based penalized norm  proposed  by Grave et al \cite{Grave2011Trace}  for  balancing the $\ell_1$  and $\ell_2$ norm.  It is defined as follows
$$\Omega(w)=\|X\diag(w)\|_*$$
where $\| \cdot \|_{*}$ is nuclear norm. A main advantage of trace Lasso being superior  to other norm  is that, the trace Lasso involves the data matrix $X$,   makes it adaptive to the correlation of data. As shown in \cite{Grave2011Trace}, if  each column of $X$ is normalized to $1$, the trace Lasso interpolates between the $\ell_1$ norm and $\ell_2$ norm in the sense of
\[\label{relaxofTr} \|w\|_{1}\leq\|X\diag(w)\|_{*}\leq\|w\|_{2}.\]
The inequality are tight. To see this, if the data are uncorrelated ($X^TX = I_p$), trace Lasso reduce to $\|w\|_1$, and if the data are  highly correlated ($X = X_{\cdot 1}1^T$),  trace Lasso equals to $\|w\|_2$. 


\subsection{Trace Lasso in matrix case}

Let   $W\in\mathbb{R}^{p\times r}$,   define a linear operator $ \mathcal{A}_{X}: \mathbb{R}^{p\times r} \rightarrow\mathbb{R}^{n\times pr}$ as
$$
\mathcal{A}_{X}(W) = \left(
X\mbox{Diag}(W_{\cdot 1}),\cdots,X\mbox{Diag}(W_{\cdot r})
\right)
$$
and its adjoint operator $\mathcal{A}_{X}^*:\mathbb{R}^{n\times pr} \rightarrow \mathbb{R}^{p\times r}$ as
$$\mathcal{A}_{X}^*(M) = \left(
\mbox{diag}(X^TM_1),\cdots,\mbox{diag}(X^TM_r)
\right)$$
where $M_i = M(:,(i-1)p+1:ip)$ denotes $i$-th block matrix of $M$. Then, the trace Lasso in matrix case is defined as follows
\[\label{TLM}
\Omega(W) = \|\mathcal{A}_{X}(W)\|_*
\]
It is easy to show that,  the   trace Lasso regularizer in matrix case \eqref{TLM} has similar properties to that in vector case. If   each column of $X$ is normalized, then the  linear operator $\mathcal{A}_{X}$ can be rewritten to
\[\label{decomTr}
\mathcal{A}_{X}(W) = \sum_{i=1}^{r}\sum_{j=1}^{p}X_{\cdot j} W_{ij}\bar{e}_{ij}^T =\sum_{j=1}^{p}X_{\cdot j}\cdot \left(\sum_{i=1}^{r} w_{ij}\bar{e}_{ij}^T \right)
\]
where $\bar{e}_{ij}\in\mathbb{R}^{pr\times 1}$ is an unit vector in which the $((i-1)p+j)$-th component is 1 and the others are 0. There are two special case:
\begin{enumerate}
\item[1)]   If the data (i.e., column vectors of $X$) are uncorrelated, i.e.,  $X^TX = I_p$. Then  \eqref{decomTr} gives a singular value decomposition of  $\mathcal{A}_X(W)$. In the case, trace Lasso \eqref{TLM} reduces to  the $\ell_{2,1}$ norm
     $$ \|\mathcal{A}_{X}(W)\|_{*}=\sum_{j=1}^{p}\|X_{\cdot j}\|_{2}\|W_{j \cdot}\|_2=\|W\|_{2,1}.$$
\item[2)]  If the data are highly correlated,  especially if all columns of $X$ are identical and have unit size, we have
$$\mathcal{A}_{X}(W) = X_{\cdot 1}\cdot \sum_{j=1}^{p}\sum_{i=1}^{r} W_{ij}\bar{e}_{ij}^T  = X_{\cdot 1}\cdot (\mbox{vec}(W))^T,$$
where $\mbox{vec}(W) = [W_{\cdot 1};\cdots;W_{\cdot r}]$. Then    trace Lasso \eqref{TLM} reduces to Frobenius norm
    $$ \|\mathcal{A}_{X}(W)\|_{*}=\|X_{\cdot 1}\cdot (\mbox{vec}(W))^T\|_{*}=\|X_{\cdot 1}\|_{2}\cdot \|\mbox{vec}(W)\|_2 = \|W\|_F.$$
\end{enumerate}
The following proposition show that the trace Lasso \eqref{TLM} in matrix case is adaptive to the correlation of data, which is similar to the original trace Lasso. 
\begin{proposition}\label{proposition ineq}
Let $X\in\mathbb{R}^{n\times p}$, and each column of $X$ is normalized,    $W\in\mathbb{R}^{p\times r}$. Then 
  \[\label{eqofTL}
  \|W\|_{F} \leq \|\mathcal{A}_{X}(W)\|_* \leq \sqrt{r} \|W\|_{2,1}.
  \]
\end{proposition}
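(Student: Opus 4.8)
The plan is to prove the two inequalities in \eqref{eqofTL} separately, in each case invoking a single standard property of the nuclear norm together with the normalization $\|X_{\cdot j}\|_2 = 1$.

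For the lower bound $\|W\|_F \le \|\mathcal{A}_{X}(W)\|_*$, I would use the elementary fact that the nuclear norm dominates the Frobenius norm, i.e. $\|M\|_* \ge \|M\|_F$ for every matrix $M$, which follows from $\sum_i\sigma_i \ge \sqrt{\sum_i\sigma_i^2}$ applied to the singular values. It then suffices to evaluate $\|\mathcal{A}_{X}(W)\|_F$. Since $\mathcal{A}_{X}(W)$ is the horizontal concatenation of the blocks $X\diag(W_{\cdot k})$, its squared Frobenius norm is $\sum_{k=1}^{r}\|X\diag(W_{\cdot k})\|_F^2 = \sum_{k=1}^{r}\sum_{j=1}^{p} W_{jk}^2\,\|X_{\cdot j}\|_2^2$. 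Using $\|X_{\cdot j}\|_2 = 1$, this collapses to $\sum_{k,j}W_{jk}^2 = \|W\|_F^2$, giving the claim. (As a cross-check, this matches the lower end of the vector-case interpolation \eqref{relaxofTr}.)

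For the upper bound $\|\mathcal{A}_{X}(W)\|_* \le \sqrt{r}\,\|W\|_{2,1}$, I would start from the rank-one decomposition \eqref{decomTr}, which expresses $\mathcal{A}_{X}(W)$ as a sum of the $pr$ rank-one matrices $W_{ij}\,X_{\cdot j}\bar{e}_{ij}^T$. Applying the triangle inequality for the nuclear norm together with the identity $\|uv^T\|_* = \|u\|_2\|v\|_2$ for rank-one matrices yields $\|\mathcal{A}_{X}(W)\|_* \le \sum_{i,j}|W_{ij}|\,\|X_{\cdot j}\|_2\,\|\bar{e}_{ij}\|_2$. Because each column $X_{\cdot j}$ and each coordinate vector $\bar{e}_{ij}$ has unit norm, the right-hand side is exactly the sum of the absolute values of all entries of $W$. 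Grouping this sum by rows and applying the elementary inequality $\|v\|_1 \le \sqrt{r}\,\|v\|_2$ to each row $W_{j\cdot}\in\mathbb{R}^{r}$ gives $\sum_{i,j}|W_{ij}| = \sum_{j}\|W_{j\cdot}\|_1 \le \sqrt{r}\sum_{j}\|W_{j\cdot}\|_2 = \sqrt{r}\,\|W\|_{2,1}$, which completes the bound.

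Neither step is technically deep; the points to be careful about are that both inequalities genuinely use the normalization hypothesis — it is what reduces the Frobenius computation to $\|W\|_F$ and what makes each rank-one factor have unit norm — and that the factor $\sqrt{r}$ is precisely the price of passing from the entrywise $\ell_1$ norm produced by the triangle inequality to the $\ell_{2,1}$ norm in the statement. I expect the only mild obstacle to be bookkeeping the double index in \eqref{decomTr} so that each rank-one term is counted exactly once; everything else is routine.
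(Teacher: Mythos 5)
Your proof is correct, and its second half takes a genuinely different route from the paper. The lower bound is argued exactly as in the paper: both of you combine the fact that the nuclear norm dominates the Frobenius norm with the computation $\|\mathcal{A}_{X}(W)\|_F^2=\sum_{k=1}^{r}\sum_{j=1}^{p}W_{jk}^2\|X_{\cdot j}\|_2^2=\|W\|_F^2$, which is where the normalization enters. For the upper bound, however, the paper uses duality: it writes $\|\mathcal{A}_{X}(W)\|_*=\max_{\|M\|_{op}\leq 1}\langle M,\mathcal{A}_{X}(W)\rangle$, passes to the adjoint $\mathcal{A}_{X}^*(M)$, and then applies Cauchy--Schwarz twice, the key observation being that $\|M\|_{op}\leq 1$ forces every column of $M$ to have $\ell_2$-norm at most $1$, hence $\|\hat{M}_j\|_F\leq\sqrt{r}$. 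You instead expand $\mathcal{A}_{X}(W)$ via the rank-one decomposition \eqref{decomTr} and use the triangle inequality together with $\|uv^T\|_*=\|u\|_2\|v\|_2$. Your argument is more elementary (no dual-norm machinery) and actually establishes the stronger intermediate estimate $\|\mathcal{A}_{X}(W)\|_*\leq\sum_{i,j}|W_{ij}|$, an entrywise $\ell_1$ bound; since $\sum_{j}\|W_{j\cdot}\|_1\leq\sqrt{r}\sum_{j}\|W_{j\cdot}\|_2$ row by row, this is at least as tight as the stated bound $\sqrt{r}\|W\|_{2,1}$ for every $r$, and strictly tighter in general when $r>1$ (the two coincide at $r=1$, where both reduce to the vector-case bound $\|w\|_1$ of \eqref{relaxofTr}). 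What the paper's dual-norm computation buys in exchange is that it is the direct generalization of the proof of Proposition 3 in Grave et al.\ to the matrix operator $\mathcal{A}_{X}$, and it exercises the adjoint $\mathcal{A}_{X}^*$, the same object that later appears in the Euclidean gradient formulas used by the algorithm. Either proof is complete and valid.
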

\begin{proof}
We first show that  $\|\mathcal{A}_{X}(W)\|_F  = \| W\|_F  $. Specifically, we have
$$
\|\mathcal{A}_{X}(W)\|_F^2 = \sum_{i=1}^{r}\|X\mbox{diag}(W_{\cdot i})\|_F^2 = \sum_{i=1}^{r}\sum_{j=1}^{p}W_{ij}^2\|X_{\cdot j}\|_2^2 = \sum_{i=1}^{r}\sum_{j=1}^{p}W_{ij}^2 = \|W\|_F^2
$$
Then, for the first inequality of \eqref{eqofTL}  we have
 $$
  \|W\|_F  = \|\mathcal{A}_{X}(W)\|_F \leq  \|\mathcal{A}_{X}(W)\|_*.
 $$
 Denote the $j$-th column of the $i$-th submatrix in $M$ by $M^i_{:,j}$, and let  $\hat{M}_j = [M^1_{:,j},M^2_{:,j},\cdots, M^r_{:,j}] \in\mathbb{R}^{n\times r}$, then  for the second inequality  of \eqref{eqofTL} we have
  \[
  \begin{split}
      \|\mathcal{A}_{X}(W)\|_*  & = \max_{\|M\|_{op}\leq 1} \left<M, \mathcal{A}_{X}(W) \right> \\
                                & = \max_{\|M\|_{2}\leq 1} \left< \mathcal{A}_{X}^*(M),W \right> \\
                                & = \max_{\|M\|_{2}\leq 1} \sum_{i=1}^{r} W_{\cdot i}^T\mbox{diag}(X^TM^i) \\
                                & = \max_{\|M\|_{2}\leq 1}\sum_{j=1}^{p}X_{\cdot j}^T \left(\sum_{i=1}^{r} W_{ij} M^i_{\cdot j} \right)  \\
                                & \leq \max_{\|M\|_{2}\leq 1}\sum_{j=1}^{p} \|X_{\cdot j}\|_2 \cdot \left\|\sum_{i=1}^{r} W_{ij} M_{\cdot j}^i \right\|_2\\
                                & \leq  \max_{\|M\|_{2}\leq 1}\sum_{j=1}^{p} \|X_{\cdot j}\|_2 \cdot \|\hat{M}_j\|_F\|W_{j \cdot}\|_2 \\
                                &  \leq \sqrt{r} \sum_{j=1}^{p} \|W_{j \cdot }\|_2 =  \sqrt{r}\|W\|_{2,1}.
  \end{split}\nonumber
  \]
  The first equality used the fact that the dual norm of the trace norm is the operator norm. The last inequality used  that   $\|X_{.j}\|=1 (\forall j)$,  and $\|M^i_{:,j}\|_2 \leq 1$ which deduces  $\|\hat{M}_j\|_F\leq \sqrt{r}$.
 \end{proof}
\begin{remark}
 If $r=1$, then Proposition \ref{proposition ineq} is indeed  Proposition 3 in \cite{Grave2011Trace}.
\end{remark}

\subsection{Regression framework of  the adaptive SCCA}
Given two data matrices $X\in \mathbb{R}^{n\times p}$ and $Y \in \mathbb{R}^{n\times q}$  on the same set of observations, where $n$ is the sample size, $p$ and $q$ are the feature numbers. Without loss of generality, we assume that data matrices $X$ and $Y$ are mean centered. By $\Sigma_{xx} = \frac{1}{n}X^TX, \Sigma_{YY} = \frac{1}{n}Y^TY$ and $\Sigma_{xy} = \frac{1}{n}X^TY$, the CCA problem can be rewritten as
\begin{equation}\label{eq:def2}\left\{
\begin{split}
&(u^*, v^*)=\arg\max_{u, v}~u^{T}X^{T}Yv, \\
&\mbox{s.t.} \quad u^{T}X^{T}Xu=1, v^{T}Y^{T}Yv=1.
\end{split}\right.
\end{equation}
For multiple canonical vectors, let $U=[u_1,u_2,\cdots, u_r]$ and $V=[v_1,v_2,\cdots, v_r]$  where $(u_i, v_i)$ denote the $i$-th pair of the canonical vectors,
  the multiple CCA problem  is
 \begin{equation}\label{eq:def3} \left\{
\begin{split}
&(U^*, V^*)=\arg\max_{U, V} ~ tr(U^{T}X^{T}YV), \\
&\mbox{s.t.} \quad U^{T}X^{T}XU=I_r, V^{T}Y^{T}YV=I_r
\end{split}\right.
\end{equation}

The CCA problem \eqref{eq:def3} can be reformulated  to a constrained  bilinear regression problem of the form
 \[\label{eq:def6}\left\{
\begin{split}
&(U^*, V^*)=\arg\min_{U,V} \frac{1}{2}\|XU-YV\|_F^2 \\
&\mbox{s.t.}  \quad U^{T}X^{T}XU=I_r, V^{T}Y^{T}YV=I_r.
\end{split}\right. \]

 To adapt to the dependence of data, we consider an adaptive sparse CCA (SCCA) model with trace Lasso regularization. Specifically, we have
\[\label{atlscca}\left\{
\begin{split}
&(U^*, V^*)=\arg\min_{U,V} \frac{1}{2}\|XU-YV\|_F^2+ \lambda_u \|\mathcal{A}_X(U)\|_*  +\lambda_v \|\mathcal{A}_Y(V)\|_*, \\
&\mbox{s.t.}  \quad U^T(X^TX)U=I_r, ~V^T(Y^TY)V=I_r,
\end{split}\right. \]
where $U\in \mathbb{R}^{p\times r}, V\in \mathbb{R}^{q\times r}$, and $\lambda_u, \lambda_v$ are the penalty parameters,  $\mathcal{A}_X:\mathbb{R}^{p\times r} \rightarrow \mathbb{R}^{n \times pr}$ and $\mathcal{A}_Y:\mathbb{R}^{q\times r} \rightarrow \mathbb{R}^{n \times qr}$ are linear operators.

\section{Optimization method for SCCA \eqref{atlscca}}\label{sec-opt}
The SCCA model \eqref{atlscca} is a nonconvex and nonsmooth optimization problem,  and it is difficult to be solved. Riemannian optimization methods are popular to solve a class constrained optimization  problem with special structure.  Hence, in this section we first reformulate problem \eqref{atlscca} to a nonsmooth optimization  problem on the generalized Stiefel manifolds, then adopt an manifold inexact augmented Lagrangian method in \cite{dkk2019ALM} to solve the resulting problem. Finally, we give a convergence analysis of the proposed method.
\subsection{Augmented Lagrangian scheme}\label{ALM}
Let $\mathcal{M}_1 = \left\{U| U^TX^TXU = I_r\right\}, \mathcal{M}_2 = \left\{V| V^TY^TYV = I_r\right\}$, and $g(\cdot) = \lambda_u\|\cdot\|_*, h(\cdot) = \lambda_v\|\cdot\|_*$, then problem \eqref{atlscca} can be reformulated as
\[\label{Mani_scca}\left\{
\begin{split}
&(U^*, V^*)=\arg\min_{U,V} \frac{1}{2}\|XU-YV\|_F^2+ g(\mathcal{A}_X(U))  +h(\mathcal{A}_Y(V)), \\
&\mbox{s.t.}  \quad U\in\mathcal{M}_1, ~V\in\mathcal{M}_2,
\end{split}\right. \]
Here, we assume that $X^TX$ and $Y^TY$ are positive define \footnote{If it is not positive define, we can replace $X^TX$ by $(1-\alpha)X^TX + \alpha I_p$.}.
Then $\mathcal{M}_1$ and $\mathcal{M}_2$ can be regarded to  generalized Stiefel manifolds, and problem \eqref{Mani_scca} is an optimization problem on generalized Stiefel manifolds. We further reformulate \eqref{Mani_scca} to
\[\label{constran_scca}\left\{
\begin{split}
(U^*, V^*) & =\arg\min_{U,V} \frac{1}{2}\|XU-YV\|_F^2+ g(P)  +h(Q), \\
\mbox{s.t.} & \quad \mathcal{A}_X(U) = P, \mathcal{A}_Y(V) = Q \\
            &    \quad U\in\mathcal{M}_1, ~V\in\mathcal{M}_2,
\end{split}\right. \]
The Lagrangian function associated with \eqref{constran_scca} is given by
\[\label{Lagrangian}
\begin{split}
\mathcal{L}(U,V,P,Q;\Lambda_1,\Lambda_2) = & \frac{1}{2}\|XU-YV\|_F^2+ g(P)  +h(Q) \\
                                          & - \left<\Lambda_1,\mathcal{A}_X(U) -P\right> - \left<\Lambda_2,\mathcal{A}_Y(V) - Q\right>,
                                          \end{split}
\]
where $\Lambda_1$ and $\Lambda_2$ denote the Lagrangian multipliers. Let $\rho$ be a penalty parameter. Then, the corresponding augmented Lagrangian function is given by
\[\label{Aug_Lagrangian}
 \mathcal{L}_{\rho}(U,V,P,Q;\Lambda_1,\Lambda_2) =   \mathcal{L}(U,V,P,Q;\Lambda_1,\Lambda_2)
                            +\frac{\rho}{2} \|\mathcal{A}_X(U) -P\|_F^2 + \frac{\rho}{2}\|\mathcal{A}_Y(V) - Q\|_F^2
 \]

Then, the proposed manifold inexact augmented Lagrangian method for \eqref{constran_scca} is summarized in Algorithm \ref{alg2}.
\begin{algorithm}[htb]
   \caption{Manifold inexact augmented Lagrangian method for problem \eqref{constran_scca}}
   \label{alg2}
\begin{algorithmic}[1]
   \STATE {\bfseries Input: } Let $\Lambda_{\min}< \Lambda_{\max}$,  $X_0\in\mathcal{M}$, tolerance $\epsilon_{\min}\ge 0$, $\epsilon_0>0$,  $\rho_0>1$, $\mu>1, 0<\tau<1$. 
   \FOR{$k=0,1,\cdots$}
   \STATE Updating the primary variables by approximately solving 
   \begin{equation}\label{approx}
     (U^{k+1},V^{k+1},P^{k+1},Q^{k+1}) = \arg\min_{\substack{U\in\mathcal{M}_1,V\in\mathcal{M}_2,\\P,Q}} \left\{ \Psi_k: =  \mathcal{L}_{\rho_k}(U,V,P,Q;\Lambda_1^k,\Lambda_2^k)\right\}
   \end{equation}
  such that a specified stopping criteria is hit.  
   \STATE Updating the dual variables via 
   \[\nn
   \begin{array}{l}
     \Lambda_1^{k+1}     = \Lambda_1^k - \rho_k(\mathcal{A}_X(U^{k+1}) -  P^{k+1}),~~\Lambda_2^{k+1}     = \Lambda_2^k - \rho_k(\mathcal{A}_Y(V^{k+1}) -  Q^{k+1}) \\

   \end{array}
   \]
   \STATE Updating $\rho_{k+1}$ via $$\rho_{k+1} = \left\{
   \begin{array}{cc}
     \rho_k & \mbox{if }~\|R^k_i\|_{\infty} \leq \tau \|R^{k-1}_i\|_{\infty}, i=1,2, \\
     \gamma \rho_k & \mbox{otherwise},
   \end{array}
    \right.  $$
  where $R^k_1 = \mathcal{A}_X(U^{k+1}) -  P^{k+1}, R^k_2 = \mathcal{A}_Y(V^{k+1}) -  Q^{k+1}$.

   \ENDFOR
\end{algorithmic}
\end{algorithm}

\subsection{Convergence analysis}
Let $W = (U, V, P, Q)\in\mathbb{R}^{p\times r} \times \mathbb{R}^{q\times r} \times \mathbb{R}^{n\times pr} \times \mathbb{R}^{n\times q r}$ be  a variable formed by concatenating $U,V,P$ and $Q$. Let $F(W) = f(U, V)+g(P)+g(Q)$ where $f(U, V)= \frac{1}{2}\|XU-YV\|_F^2$.  Then, problem \eqref{constran_scca} can be rewritten as
\[\label{newpro}
\min_W F(W) ~~ \mbox{s.t. }~~ h(W) = 0, ~W\in\mathcal{N},
\]
 where $\mathcal{N}:=\mathcal{M}_1\times \mathcal{M}_2 \times \mathbb{R}^{n\times pr} \times \mathbb{R}^{n\times q r}$,  and $h(W) $ is given by
 \[\nn
 h(W): = \left(
   \begin{array}{c}
     \mathcal{A}_X(U) - P~,  ~\mathcal{A}_Y(V) - Q
   \end{array}
 \right).
 \]
 The corresponding augmented Lagrangian can be rewritten as
\[
\mathcal{L}_{\rho}(W;Z) = F(W) + \sum_{i,j}^{r}Z_{ij}[h(W)]_{ij} + \frac{\rho}{2}\sum_{i,j}^{r} [h(W)]_{ij}^2
\]
The corresponding KKT condition is given by
\begin{equation}\label{kkt}
  0\in \partial F(W^*) +  \sum_{i=1}^{m}\sum_{j=1}^{r}Z_{ij} \mbox{grad}[h(W^*)]_{ij}, ~~ h(W^*) = 0, ~~W^*\in\mathcal{N}
\end{equation}
 where $ \partial F(W^*) $  is the Riemannian subdifferential of $F$ at $W^*$. 
To obtain an efficient implementation of Algorithm 1, we  inexactly solve the iteration subproblem \eqref{approx} in which    the following stoping criteria is used:
\[\label{stoping 1}
 \delta^k \in \partial  \Psi_k(W^{k+1}) ~~ \mbox{and} ~\|\delta^k\| \leq \epsilon_k \]
where $\epsilon_k \rightarrow 0$ as $k\rightarrow \infty$.

Following Yang, Zhang and Song \cite{Yang2014}, we give the constraint qualifications of problem \eqref{newpro}:

\begin{definition}[LICQ]
  Linear independence constraint qualifications (LICQ) are said to hold at $W\in\mathcal{N}$ for problem \eqref{newpro} if
  \[
  \{\mbox{grad}[h(W)]_{ij}| i=1,\cdots,m;j=1,\cdots,r\} \mbox{ are linearly independent in } T_W\mathcal{N}.
  \]
\end{definition}

\begin{theorem}
  Suppose $\{W^k\}_{k\in\mathbb{N}}$ is a sequence generated by Algorithm \ref{alg2}, and the stoping criteria \eqref{stoping 1}
 is hit at the $k$-th iteration. Then the limit point set of $\{W^k\}_{k\in\mathbb{N}}$ is nonempty. Let $W^*$ be a limit point of  $\{W^k\}$, and the LICQ holds at $W^*$.  Then $W^*$ is a KKT point of the problem \eqref{newpro}.
\end{theorem}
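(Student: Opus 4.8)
The plan is to follow the standard analysis of inexact augmented Lagrangian methods adapted to the manifold setting (as in \cite{Yang2014,dkk2019ALM}), organised into three parts: boundedness of the iterates (which yields the nonempty limit set), asymptotic feasibility, and the passage to the limit in the inexact stationarity condition. For boundedness I would argue block by block. The blocks $U^k$ and $V^k$ live on the generalized Stiefel manifolds $\mathcal{M}_1,\mathcal{M}_2$, which are compact because $X^TX$ and $Y^TY$ are positive definite; hence $\{U^k\},\{V^k\}$ are automatically bounded. For the multipliers, I would read off the $P$- and $Q$-blocks of the inexact condition \eqref{stoping 1}: since $P,Q$ are free Euclidean variables, the corresponding components of $\delta^k$ produce subgradients $s_1^k,s_2^k$ of the nuclear-norm regularisers at $P^{k+1},Q^{k+1}$ satisfying $\|s_i^k - Z_i^{k+1}\|\le\epsilon_k$, where the dual update identifies the combination of multiplier and penalty terms as the updated multiplier $Z_i^{k+1}$. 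Because the subdifferential of a scaled nuclear norm is uniformly bounded in operator norm, $\{Z_1^k\},\{Z_2^k\}$ are bounded. The dual update then rearranges to $P^{k+1} = \mathcal{A}_X(U^{k+1}) + (Z_1^k - Z_1^{k+1})/\rho_k$ (and analogously for $Q$), and since $\rho_k\ge\rho_0>1$ and $\mathcal{A}_X(U^{k+1})$ is bounded, $\{P^k\},\{Q^k\}$ are bounded. Thus $\{W^k\}$ is bounded and its limit point set is nonempty. (Multiplier boundedness may alternatively be extracted from LICQ by the usual normalisation/contradiction argument, which is the route suggested by the hypothesis.)

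Next I would show asymptotic feasibility $h(W^k)\to 0$ by separating the two regimes of the penalty update. If $\{\rho_k\}$ stays bounded, it is eventually constant, so the branch $\rho_{k+1}=\rho_k$ is taken for all large $k$; this forces $\|R_i^k\|_\infty\le\tau\|R_i^{k-1}\|_\infty$ and hence $\|R_i^k\|_\infty\to 0$ geometrically. If instead $\rho_k\to\infty$, then from $h(W^{k+1}) = (Z^k - Z^{k+1})/\rho_k$ with bounded numerator we again obtain $h(W^{k+1})\to 0$. Since $\mathcal{N}$ is closed, any limit point $W^*$ satisfies $W^*\in\mathcal{N}$ and $h(W^*)=0$.

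For the KKT identification I would extract a subsequence $W^{k_j}\to W^*$ along which the bounded multipliers converge, $Z^{k_j}\to Z^*$. Writing the inexact condition at iteration $k_j-1$ and using the decomposition of the Riemannian subdifferential of $\mathcal{L}_{\rho}$, the multiplier-plus-penalty term collapses to $Z^{k_j}$, giving $\delta^{k_j-1}\in\partial F(W^{k_j}) + \sum_{ij}Z^{k_j}_{ij}\,\mbox{grad}[h(W^{k_j})]_{ij}$ with $\|\delta^{k_j-1}\|\le\epsilon_{k_j-1}\to 0$. Letting $j\to\infty$ and invoking the outer semicontinuity (closedness) of $\partial F$, the continuity of $W\mapsto\mbox{grad}[h(W)]_{ij}$ (the constraint $h$ is affine and the tangent-space projections depend continuously on the base point), and $\epsilon_{k_j-1}\to 0$, I would conclude $0\in\partial F(W^*) + \sum_{ij}Z^*_{ij}\,\mbox{grad}[h(W^*)]_{ij}$. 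Together with $h(W^*)=0$ and $W^*\in\mathcal{N}$, this is exactly the KKT system \eqref{kkt}.

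The step I expect to be the main obstacle is the rigorous subdifferential bookkeeping in the last part: one must justify that the Riemannian subdifferential of the augmented Lagrangian splits cleanly into $\partial F$ plus the linearised constraint terms carrying the updated multiplier, and that this relation is stable in the limit. The nonsmoothness of the nuclear-norm regularisers forces a set-valued (outer semicontinuous) limit rather than a plain gradient limit, and the interaction between the manifold projection and the subdifferential of the nuclear norm must be handled carefully. A secondary delicate point is the role of LICQ: if multiplier boundedness is derived from LICQ rather than from the nuclear-norm structure, one must avoid circularity by first securing boundedness of $(P^k,Q^k)$, and hence of the whole sequence, before running the normalisation argument at the limit point.
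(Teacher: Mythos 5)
Your proposal is correct, but it cannot be compared with an in-paper argument in the usual sense: the paper's entire proof of this theorem is the single line ``See \cite{dkk2019ALM}'', i.e.\ it invokes the generic convergence theorem for the manifold inexact augmented Lagrangian method from that reference rather than proving anything specific to problem \eqref{newpro}. Your write-up is essentially a self-contained reconstruction of that generic analysis (boundedness of the iterates, asymptotic feasibility split into the bounded-$\rho_k$ and $\rho_k\to\infty$ regimes, and an outer-semicontinuous passage to the limit in the inexact stationarity condition \eqref{stoping 1}), and each of the three steps checks out. Where you genuinely depart from the generic route is the multiplier bound: you read off the $P$- and $Q$-blocks of \eqref{stoping 1}, use the dual update to identify $\delta_P^k-\Lambda_1^{k+1}\in\partial g(P^{k+1})$, and exploit that $\partial\bigl(\lambda_u\|\cdot\|_*\bigr)$ is uniformly bounded in operator norm. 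This is a real gain: it makes the multiplier sequence bounded unconditionally, so in your version LICQ is never actually invoked --- the hypothesis ``LICQ holds at $W^*$'' becomes redundant for this particular problem (consistently, the paper's subsequent lemma shows LICQ holds at every $W\in\mathcal{N}$ anyway). The general theorem in \cite{dkk2019ALM} assumes LICQ precisely because, for an arbitrary nonsmooth term, multiplier boundedness is not automatic and must be recovered at the limit point via the normalisation/contradiction argument you mention; your structural argument avoids that circularity concern entirely. Finally, the two obstacles you flag at the end largely dissolve on inspection: the subdifferential bookkeeping is clean because $\Psi_k=F+(\text{smooth terms})$, so the exact sum rule $\partial\Psi_k=\partial F+\nabla(\text{smooth terms})$ applies with no qualification, and there is no interaction between the manifold projection and the nuclear-norm subdifferential, since the nonsmooth terms act only on the Euclidean blocks $(P,Q)$ while the manifold constraints act only on $(U,V)$.
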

\begin{proof}
See \cite{dkk2019ALM}.
\end{proof}
%

\begin{lemma}
 The LICQ always holds at $\forall ~W\in \mathcal{N}$ for problem \eqref{newpro} .
\end{lemma}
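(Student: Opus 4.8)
The plan is to exploit the fact that the auxiliary variables $P$ and $Q$ are free Euclidean variables appearing linearly, with an identity coefficient, in the two blocks of constraints $\mathcal{A}_X(U) - P = 0$ and $\mathcal{A}_Y(V) - Q = 0$. Because the last two factors $\mathbb{R}^{n\times pr}$ and $\mathbb{R}^{n\times qr}$ of $\mathcal{N}$ are unconstrained linear spaces, their tangent spaces coincide with the whole ambient spaces, so on these blocks the Riemannian gradient reduces to the ordinary Euclidean gradient. This makes the identity pattern in the $P$- (resp. $Q$-) component of each constraint gradient survive intact, and linear independence will follow simply by reading off these Euclidean blocks; the tangential parts along $\mathcal{M}_1$ and $\mathcal{M}_2$ will never have to be computed.

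First I would fix an arbitrary $W = (U,V,P,Q)\in\mathcal{N}$ and decompose the tangent space as a product $T_W\mathcal{N} = T_U\mathcal{M}_1 \times T_V\mathcal{M}_2 \times \mathbb{R}^{n\times pr} \times \mathbb{R}^{n\times qr}$. Writing a generic entry of the first constraint block as $[\mathcal{A}_X(U) - P]_{ab}$ and of the second as $[\mathcal{A}_Y(V) - Q]_{cd}$, I would record only the projections of their Riemannian gradients onto the two Euclidean factors: the $P$-component of $\mbox{grad}[\mathcal{A}_X(U) - P]_{ab}$ equals $-E_{ab}$, the standard basis matrix of $\mathbb{R}^{n\times pr}$ carrying a $1$ in position $(a,b)$, while its $Q$-component vanishes; symmetrically the $Q$-component of $\mbox{grad}[\mathcal{A}_Y(V) - Q]_{cd}$ is $-E_{cd}$ and its $P$-component vanishes. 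No projection is needed on these blocks precisely because the corresponding tangent spaces are the full Euclidean spaces.

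Next I would take an arbitrary vanishing linear combination
\[
\sum_{a,b}\alpha_{ab}\,\mbox{grad}[\mathcal{A}_X(U) - P]_{ab} + \sum_{c,d}\beta_{cd}\,\mbox{grad}[\mathcal{A}_Y(V) - Q]_{cd} = 0
\]
in $T_W\mathcal{N}$ and project it onto the two Euclidean factors. The $P$-block gives $-\sum_{a,b}\alpha_{ab}E_{ab} = 0$, which forces $\alpha_{ab}=0$ for all $a,b$ since the $E_{ab}$ form a basis of $\mathbb{R}^{n\times pr}$; the $Q$-block likewise gives $\beta_{cd}=0$ for all $c,d$. Hence the constraint gradients are linearly independent in $T_W\mathcal{N}$, and since $W$ was arbitrary the LICQ holds at every point of $\mathcal{N}$.

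I expect no real obstacle here, since the decoupling supplied by the free variables $P$ and $Q$ makes the argument essentially automatic. The only point requiring care is the bookkeeping for the product-manifold tangent space, together with the verification that the orthogonal projection onto the factors $\mathbb{R}^{n\times pr}$ and $\mathbb{R}^{n\times qr}$ is the identity, so that the Euclidean blocks of the Riemannian gradients are genuinely $-E_{ab}$ and $-E_{cd}$; this is immediate from $\mathcal{N}$ being a product whose last two factors are linear spaces.
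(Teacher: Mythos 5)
Your proof is correct and takes essentially the same route as the paper's: both hinge on the observation that the constraint gradients carry identity-pattern blocks ($\pm E_{ab}$) in the free Euclidean variables $P$ and $Q$, so projecting any vanishing linear combination onto those factors forces all coefficients to zero. The only cosmetic difference is that you argue directly with the Riemannian gradients (using that the tangent-space projection is the identity on the factors $\mathbb{R}^{n\times pr}$ and $\mathbb{R}^{n\times qr}$), whereas the paper phrases the same fact through membership of the Euclidean-gradient combination in the normal cone $N_W\mathcal{N}$, along the way writing down a basis of the normal space of $\mathcal{M}_1\times\mathcal{M}_2$ that, as your argument makes clear, is never actually needed.
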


\begin{proof}
Let $h_1(W): = \mathcal{A}_X(U)-P, h_2(W): = \mathcal{A}_Y(V) - Q$, then
\[
\begin{split}
\nabla [h_1(W)]_{ij} & = \nabla_U [h_1(W)]_{ij} \times \nabla_V [h_1(W)]_{ij} \times \nabla_P [h_1(W)]_{ij} \times \nabla_Q [h_1(W)]_{ij},\\
&i=1,\cdots,n; j=1,\cdots,pr, \\
\nabla [h_2(W)]_{ij} & = \nabla_U [h_2(W)]_{ij} \times \nabla_V [h_2(W)]_{ij} \times \nabla_P [h_2(W)]_{ij} \times \nabla_Q [h_2(W)]_{ij},\\
&i=1,\cdots,n; j=1,\cdots,qr.
\end{split}
\]
For all   $i=1,\cdots,m, j=1,\cdots,n$, let $E^{m\times n}_{ij}$ be a $m\times n$  matrix in which the entry at the $i$-th row and the $j$-column is 1,  the  others are 0.  Then
\[
\begin{array}{lll}
 \nabla_P [h_1(W)]_{ij} = E^{n\times pr}_{ij},  & \nabla_Q [h_1(W)]_{ij} = 0, &i=1,\cdots,n; j=1,\cdots,pr, \\
 \nabla_P [h_1(W)]_{ij} = 0,                    & \nabla_Q [h_1(W)]_{ij} = E^{n\times qr}_{ij}, &i=1,\cdots,n; j=1,\cdots,qr.
\end{array}
\]
A basis of the normal cone of $\mathcal{M}:\mathcal{M}_1\times \mathcal{M}_2$ at $(U,V)$, denoted by $N_{U}\mathcal{M}_1 \times N_{V}\mathcal{M}_2$,  is given by
\begin{equation}
  \left\{\Sigma_{xx} U(e_ie_j^T + e_je_i^T): i=1,\cdots,r,j=i,\cdots,r \right\} \times \left\{\Sigma_{yy} V(e_ie_j^T + e_je_i^T): i=1,\cdots,r,j=i,\cdots,r \right\} .\nonumber
\end{equation}
It is easy to  show that, $\forall~W\in\mathcal{N}$, if there exists $Z^1, Z^2$ such that
\[\label{licq}
\sum_{i=1}^{n}\sum_{j=1}^{pr} Z_{ij}^1\nabla [h_1(W)]_{ij} + \sum_{i=1}^{n}\sum_{j=1}^{qr} Z_{ij}^2\nabla [h_2(W)]_{ij} \in N_W\mathcal{N},
\]
then    $Z^1 = Z^2 = 0$.  Since  $\mathcal{N}$ is a submanifold of  Euclidean space, it derives immediately from  \eqref{licq} that  \[\nn
\sum_{i=1}^{n}\sum_{j=1}^{pr} Z_{ij}^1\mbox{grad} [h_1(W)]_{ij} + \sum_{i=1}^{n}\sum_{j=1}^{qr} Z_{ij}^2\mbox{grad} [h_2(W)]_{ij}=0.
\]
Which implies  that   LICQ  holds at  $W$ and  completes the proof.
\end{proof}

\subsection{Riemannian gradient method for subproblem \eqref{approx}}
In section \ref{ALM}, we present an manifold inexact augmented Lagrangian method to solve problem \eqref{constran_scca}.
The main challenge in the proposed method (Algorithm \ref{alg2}) is  to solve   subproblem \eqref{approx} efficiently.  Problem \eqref{approx} is a nonsmooth problem under manifold constrained. In this subsection, we first get an equivalence smooth optimization problem by using the Moreau envelop technique, then we present Riemannian gradient method to solve the equivalent problem.

The proximal mapping $\mbox{Prox}_p(\cdot)$ associated with $p$ is defined by
\[\label{prox}
\mbox{Prox}_p(U) = \arg\min_W\left\{p(W) + \frac{1}{2}\|U-W\|_F^2\right\}.
\]
For   fixed $\Lambda_1,\Lambda_2$ and $\rho$, we consider
\[\nn
 \min_{U,V,P,Q} \Big\{\Psi(U,V,P,Q):   = \mathcal{L}_{\rho}(U,V,P,Q;\Lambda_1,\Lambda_2),~~
\mbox{s.t. } ~   U\in\mathcal{M}_1, V\in\mathcal{M}_2\Big\}
 \]
Let
\[\nn
\begin{split}
\psi(U,V) & = \inf_{P,Q}\Psi(U,V,P,Q) \\
          & =  \frac{1}{2}\|XU-YV\|_F^2+ g(\mbox{Prox}_{g/\rho}(\mathcal{A}_X(U)- \Lambda_1/\rho))  +h(\mbox{Prox}_{h/\rho}(\mathcal{A}_Y(V)- \Lambda_2/\rho)) \\
          & +\frac{\rho}{2} \|\mathcal{A}_X(U)- \Lambda_1/\rho - \mbox{Prox}_{g/\rho}(\mathcal{A}_X(U)- \Lambda_1/\rho)\|_F^2 \\
          &+ \frac{\rho}{2}\|\mathcal{A}_Y(V) - \mbox{Prox}_{h/\rho}(\mathcal{A}_Y(V)- \Lambda_2/\rho)\|_F^2  - \frac{1}{2\rho}\|\Lambda_1\|_F^2 - \frac{1}{2\rho}\|\Lambda_2\|_F^2.
\end{split}
\]
Hence, if
\[\nn
\begin{split}
(\tilde{U},\tilde{V},\tilde{P},\tilde{Q}) & = \arg\min_{U\in\mathcal{M}_1, V\in\mathcal{M}_2,P,Q} \Psi(U,V,P,Q), \\
\end{split}
\]
then $(\tilde{U},\tilde{V},\tilde{P},\tilde{Q})$ can be computed by
\[\label{separable}
\left\{
\begin{array}{l}
  (\tilde{U},\tilde{V}) = \arg\min\limits_{U\in\mathcal{M}_1, V\in\mathcal{M}_2}\psi(U,V),  \\
  \tilde{P} = \mbox{Prox}_{g/\rho}(\mathcal{A}_X(\tilde{U})- \Lambda_1/\rho), \\
  \tilde{Q} = \mbox{Prox}_{h/\rho}(\mathcal{A}_Y(\tilde{V})- \Lambda_2/\rho).
\end{array}
\right.
\]
Notice that the subproblems for $P$ and $Q$ are proximal operators. Both  $g$ and $h$ in \eqref{separable} are nuclear norm functions, the proximal operator is indeed a singular value shrinkage operator, which is given by:
\[\label{svs}
D_{\tau}(Y)  = \arg\min_X\left\{\frac{1}{2}\|X-Y\|_F^2 + \tau \|X\|_* \right\}
            = UD_{\tau}(\Sigma)V^{'},\]
where $Y = U\Sigma V^{'}, \Sigma = \mbox{diag}(\{\sigma_i\}_{1\leq i\leq r})$ and $D_{\tau}(\Sigma) = \mbox{diag}(\{(\sigma_i-\tau)_{+}\})$.

Now we focus on the subproblem regarding jointed variable  $(U,V)$  in \eqref{separable}. Recall that
\[\label{subproblem1}
  (\tilde{U},\tilde{V})   = \arg\min_{U,V}\big\{\psi(U,V) ,  ~
    \mbox{s.t. }    U\in\mathcal{M}_1, V\in\mathcal{M}_2\big\}.
\]
Let $\mathcal{W}: = (U,V)$ and $\mathcal{M}: = \mathcal{M}_1 \bigotimes \mathcal{M}_2$ be  a product manifold. Then, problem \eqref{subproblem1} can be formulated
\[\label{subproblem2}
   \tilde{\mathcal{W}} = \arg\min_{\mathcal{W}}\big\{\psi(\mathcal{W}), ~      \mbox{s.t. }    \mathcal{W}\in\mathcal{M}\big\}.
 \]
By Lemma \ref{lem-morea},   $\psi(\mathcal{W})$ is continuously differentiable in Euclidean space, and its Euclidean gradient is
\begin{equation}\nn
\nabla \psi(\mathcal{W}) = \left(
  \begin{array}{c}
    \nabla_U\psi(\mathcal{W}) \\
    \nabla_V\psi(\mathcal{W})
  \end{array}
  \right)= \left(
  \begin{array}{ccc}
  X^T(XU - YV) + \rho\mathcal{A}^*_X(U)\left[\mathcal{A}_X(U) - \frac{1}{\rho}\Lambda_1 - \mbox{Prox}_{g/\rho}(\mathcal{A}_X(U) - \frac{1}{\rho}\Lambda_1)\right] \\
  Y^T(YV - XU) + \rho\mathcal{A}^*_Y(V)\left[\mathcal{A}_Y(V) - \frac{1}{\rho}\Lambda_2 - \mbox{Prox}_{h/\rho}(\mathcal{A}_Y(V) - \frac{1}{\rho}\Lambda_2)\right]
  \end{array}
  \right)
\end{equation}
Since $\mathcal{M}$ is a Riemannian submanifold in Euclidean space,   by lemma \eqref{retract-euclidean}    $\psi(W)$   is retraction smooth, and its Riemannian gradient is
\begin{equation}
  \mbox{grad}\psi(\mathcal{W}) = \left(
  \begin{array}{c}
    \mbox{grad}_U\psi(\mathcal{W}) \\
    \mbox{grad}_V\psi(\mathcal{W})
  \end{array}
  \right) =
  \left(
  \begin{array}{c}
    P_{T_U\mathcal{M}_1}(\nabla_U \psi(\mathcal{W})) \\
    P_{T_V\mathcal{M}_2}(\nabla_V \psi(\mathcal{W}))
  \end{array}
  \right).
\end{equation}

It is shown that problem \eqref{subproblem2} is a smooth optimization  problem on Riemannian manifold.  In this paper,  we adopt a Riemannian Barzilai-Borwein (RBB) gradient  method  \cite{IannazzoThe} to  solve problem \eqref{subproblem2}, see Algorithm \ref{alg3} for details.

\begin{algorithm}[tb]
   \caption{Riemannian Barzilai-Borwein gradient method for subproblem \eqref{subproblem2}, RBB }
   \label{alg3}
\begin{algorithmic}[1]
   \STATE {\bfseries Given: }   $\mathcal{W}^0\in\mathcal{M}$,   tolerance $\epsilon>0$, initial step size $\alpha_0^{BB}$.  Let  $g^0 = \mbox{grad}\psi(\mathcal{W}^0)$, the sufficient decrease parameter $\gamma$ and the step length contraction factor $\sigma\in(0,1)$.
   \STATE {\bfseries Initialize:  } k = 0.

   \WHILE{$\|g^k\| \geq \epsilon$}
   \STATE Find the smallest positive integer $h$ such that
   \[
   \psi(R_{\mathcal{W}^k}(-\sigma^h\alpha_k^{BB}g^k)) \leq \psi(\mathcal{W}^k) - \gamma \sigma^h \alpha_k^{BB} \|g^k\|_{\mathcal{W}^k}^2
   \]
   and set $\alpha_k : = \sigma^h\alpha_k^{BB}$.
   \STATE Compute $     \mathcal{W}^{k+1} = R_{\mathcal{W}^k}(-\alpha_k g^k)$, and $g^{k+1} = \mbox{grad}\psi(\mathcal{W}^{k+1})$.
   \STATE Set
   $$
   \tau_{k+1} = \frac{\left<s^k,s^k\right>_{\mathcal{W}^{k+1}}}{\left<s^k,y^k\right>_{\mathcal{W}^{k+1}}},
   $$
   where $s^k: = g^{k+1} - \mathcal{T}_{\mathcal{W}^k\rightarrow \mathcal{W}^{k+1}}(g^k)$ and $y^k:=\mathcal{T}_{\mathcal{W}^k\rightarrow \mathcal{W}^{k+1}}(-\alpha_kg^k)$.
   \STATE Compute the new step size $\alpha_{k+1}^{BB}$
   $$
   \alpha_{k+1}^{BB} = \left\{
   \begin{array}{ll}
     \min\{\alpha_{\max},\max\{\alpha_{\min},\tau_{k+1}\}\} & \mbox{if} \left<s^k,y^k\right>_{\mathcal{W}^{k+1}} >0, \\
     \alpha_{\max} & \mbox{otherwise}.
   \end{array}
   \right.
   $$
   \STATE Set $k:=k+1$.
   \ENDWHILE
\end{algorithmic}
\end{algorithm}

\section{Random Simulation} \label{sec-simu}

In the section, the performance of the SCCA model and the proposed method is verified by random simulation.   The proposed  adaptive trace Lasso regularization CCA  in this paper is compared  with the sparse CCA-$\ell_1$ model (named as CoLaR) proposed by Gao \cite{gao2017sparse}. The CoLaR is a computationally feasible two-stage method, which consists of a convex-programming-based initialization stage and a group-Lasso-based refinement stage. In the first stage, CoLaR solves the following convex minimization problem:
\[\label{init}\left\{
\begin{split}
  \hat{F}  &= \arg\min_F -\left<\hat{\Sigma}_{xy},F\right> + \tau \|F\|_1 \\
    & \mbox{s.t.} ~ \|(\hat{\Sigma}_x)^{1/2}F(\hat{\Sigma}_y)^{1/2}\|_* \leq r, \|(\hat{\Sigma}_x)^{1/2}F(\hat{\Sigma}_y)^{1/2}\|_{2}\leq 1,
\end{split}\right.
\]
where $\hat{\Sigma}_{xy},\hat{\Sigma}_{x}$ and $\hat{\Sigma}_{y}$ are  sample covariance matrices. Let $U^0$ and $V^0$ be the matrices whose column vectors are respectively the top $r-$ left and right singular vectors of $\hat{F}$. Then, in the second stage, CoLaR solve the following group-Lasso problem:
\[\label{lar}\left\{
\begin{split}
U^1 = \arg\min_U tr(U^T\hat{\Sigma}_xU) - 2tr(U^T\hat{\Sigma}_{xy}V^0) + \tau^{'}\sum_{i=1}^{p} \|U_i.\|_2, \\
V^1 = \arg\min_V tr(V^T\hat{\Sigma}_yV) - 2tr((U^0)^T\hat{\Sigma}_{xy}V) + \tau^{'}\sum_{i=1}^{q} \|V_i.\|_2
\end{split}\right.
\]
Finally, $U^1$ and $V^1$ are projected to generalized Stiefel manifolds $U^T\hat{\Sigma}_xU = I_r$ and $V^T\hat{\Sigma}_yV = I_r$.

The codes of CoLaR were downloaded from Ma's homepage\footnote{http://www-stat.wharton.upenn.edu/~zongming/research.html}. The first stage is terminated if it does not make much progress, or it reaches the maximum iteration number 10, the parameter in step \eqref{init} is set to $\tau =0.55\sqrt{(r+\log(q))/n}$.   The obtained solution $(U^0, V^0)$ is used as the initial point for CoLaR.  For our method, we also use $(U^0,V^0)$ obtained by \eqref{init} as the original point. The parameters of our Algorithm \ref{alg2} are set to : $\tau = 0.99, \sigma = 1.05, \rho_0 = \max\{\lambda_{\max}(X^TX),\lambda_{\max}(Y^TY)\}, Z_{\min} = -100\cdot 1_{d\times r}, Z_{\max} = 100\cdot 1_{d\times r}, Z^0 = 0_{d\times r}$ and $\epsilon_k = \max(10^{-3},0.9^k)$  where $k\in\mathbb{N}$ is the iteration counter. We terminated   Algorithm \ref{alg2}  if  $\max\{\|\mathcal{A}_X(U^{k+1}) -  P^{k+1}\|_F^2, \|\mathcal{A}_Y(V^{k+1}) -  Q^{k+1}\|_F^2\} \leq 10^{-8}$ or $k\ge 500$. For Algorithm \ref{alg3}, the inner iteration of the  Algorithm \ref{alg2}, it is terminated if  $\|\delta^k\| \leq \epsilon_k $, where $\delta^k \in \partial  \Psi_k(W^{k+1})$, or the inner iteration number exceeds $100$.

\paragraph{Parameters selection}
We using the $\kappa$-fold cross-validation (CV) to select the optimal parameters for CoLaR and our method. For our method, we set $\lambda_u  = b\sqrt{(r+\log(p))/n}$ and $\lambda_v = b\sqrt{(r+\log(q))/n}$. For CoLaR, the parameter $\tau'$ is set to = $\tau' =  b\sqrt{(r+\log(\max(p,q)))/n}$. Then, we used $\kappa$-fold  cross validation to select a common penalty parameter $b$. In particular, for each choice of parameter $b$,  a $\frac{\kappa-1}{\kappa}$ proportion of the data (training sample) is used to obtain estimates $(\hat{U},\hat{V})$. Then we evaluate the correlation between obtained canonical vectors in the remaining $\frac{1}{\kappa}$ of the data set (testing sample), and compute average correlation over $k$ CV steps. The optimal parameter $b$ then corresponds to the highest average correlation.  We set $\kappa=10$ in this paper.



\subsection{Simulation data }
The random simulation data is generated with respect to the following  scheme: for each simulation design, there has $p$ variables in data set $X$ and $q$ variables in data set $Y$, and the sample size is $n$. In all these settings, we set $r =2$ and  $\Lambda =
\mbox{Diag}(\lambda_1,\lambda_2)$ where $\lambda_1 = 0.9,\lambda_2 = 0.8$. The nonzero rows of both U and V are set  to $S = \{1, 6, 11, 16, 21\}$, the corresponding value at the nonzero coordinates are generated by normalizing random numbers drawn from the uniform distribution on finite set $\{-2, -1, 0, 1, 2\}$. We determinate the covariance matrix $\Sigma_x$ and $\Sigma_y$ via the following three procedure:
\begin{enumerate}
  \item[(1)] Identity matrices: $\Sigma_x = I_p, \Sigma_y = I_q$,
  \item[(2)] Toeplitz matrices: $\Sigma_x = 0.3^{|i-j|}$,  for $i,j\in[p]$, and $\Sigma_y = 0.3^{|i-j|}$,  for $i,j\in[q]$,
  \item[(3)] Correlation matrices: $$
  (\Sigma_x)_{ij} = \left\{
  \begin{array}{ccc}
    \sigma & (i,j)\in S ~ \mbox{and} ~ i\neq j \\
    1 & i == j \\
    0 & \mbox{otherwise}
  \end{array}
  \right. ~~(\Sigma_y)_{ij} = \left\{
  \begin{array}{ccc}
    \sigma & (i,j)\in S ~ \mbox{and} ~ i\neq j \\
    1 & i == j \\
    0 & \mbox{otherwise}
  \end{array}
  \right.,
  $$
  where $\sigma$ is the correlation degree. In our experiments, we set $\sigma = 0.3, 0.5, 0.8$, respectively.
\end{enumerate}
After determinate the covariance matrix, we generate the cross-covariance matrix $\Sigma_{xy}$ by $$\Sigma_{xy} = \Sigma_x U \Lambda V \Sigma_y, $$
and  generate the data sets $X\in\mathbb{R}^{n\times p}$ and $Y\in\mathbb{R}^{n\times q}$ via
$$
\left(\begin{array}{c}
        x \\
        y
      \end{array}
\right) \sim \mathcal{N}\left(
\left(\begin{array}{c}
        0 \\
        0
      \end{array}
\right),\left(
          \begin{array}{cc}
            \Sigma_x & \Sigma_{xy} \\
            \Sigma_{xy}^T & \Sigma_y \\
          \end{array}
        \right)
\right)
$$
Finally, we used the subspace distance between the estimation $(\widehat{U},\widehat{V})$ and the ground truth $(U,V)$ as the prediction
errors:
$$
\mbox{lossu} = \|P_U - P_{\widehat{U}}\|_F^2,~~ \mbox{lossv} = \|P_V - P_{\widehat{V}}\|_F^2.
$$
where $P_U = U(U^TU)^{-1}U^T,  P_V = V(V^TV)^{-1}V^T $ are projection matrices on the column space of $U$ and $V$, respectively.

Tables \ref{table:1},\ref{table:2},\ref{table:3}display the simulation results:  in each settings, the medians of the prediction error of CoLaR and our method over 100 repetitions for four different configurations of $(n; p; q)$ values are list for comparison.

In each table, the columns ``lossu'' and ``lossv'' report the smallest estimation errors of the medians out of the eleven trials on each simulated dataset,  $(\rho_1,\rho_2)$ is the two canonical correlations.  The columns ``Init'' report the results generated by the initialization step \eqref{init}.  The columns ``CoLaR'' and ``Ours'' report the  results of the CoLaR   and our method,  respectively.

In case (1) and (2), i.e.,  $\Sigma_x$ and $\Sigma_y$ are the identity matrices and Toeplitz matrices, the results of  CoLaR and our method are roughly the same. In case (3), our method   consistently outperform  the CoLaR estimators. In particular, as the correlation parameter $\sigma$ going to larger, the performance of our method is   more significant  superiority than the CoLaR. This is in accordance to the theoretical results for trace Lasso regularization in Section \ref{sec-trace}.
\begin{table}[tbp]
\label{table:1}
\caption{ Performance for our method and CoLaR  in case (1) }
\vskip 10pt

\centering
\setlength{\tabcolsep}{0.4mm}{
\begin{tabular}{c|c|c|c|c|c|c|c|c|c}
\hline
   \multirow{2}{*}{$(n,p,q)$}           & \multicolumn{3}{c|}{lossu}              & \multicolumn{3}{c|}{lossv}              & \multicolumn{3}{c}{$(\rho_1,\rho_2)$}                                  \\ \cline{2-10}
     & Init  & CoLaR       & Ours            & Init  & CoLaR       & Ours            & Init            & CoLaR       & Ours                      \\ \hline
(200,200,200) & 0.533 & 0.114          & \textbf{0.092} & 0.325 & 0.094          & \textbf{0.063} & (0.8325,0.6662) & (0.8852,0.7726) & \textbf{(0.8891,0.7852)} \\
(300,200,200) & 0.050 & 0.011          & \textbf{0.008} & 0.174 & 0.047          & \textbf{0.045} & (0.8832,0.6571) & (0.8987,0.7974) & \textbf{(0.8988,0.7977)} \\
(400,200,200) & 0.195 & 0.019          & \textbf{0.018} & 0.237 & \textbf{0.030} & 0.031          & (0.8527,0.7436) & (0.8948,0.7937) & \textbf{(0.8949,0.7941)} \\
(300,400,400) & 0.361 & \textbf{0.047} & 0.049          & 0.290 & 0.038          & \textbf{0.037} & (0.8442,0.6944) & (0.8962,0.7859) & (0.8972,0.7857)          \\ \hline
\end{tabular}}
\end{table}

\begin{table}[tbp]
\label{table:2}
\caption{ Performance for our method and  CoLaR  in case (2)}
\vskip 10pt

\centering
\setlength{\tabcolsep}{0.4mm}{
\begin{tabular}{c|c|c|c|c|c|c|ccc}
\hline
\multirow{2}{*}{$(n,p,q)$} & \multicolumn{3}{c|}{lossu}              & \multicolumn{3}{c|}{lossv}              & \multicolumn{3}{c}{$(\rho_1,\rho_2)$}                                                                                      \\ \cline{2-10}
                           & Init  & CoLaR       & Ours            & Init  & CoLaR       & Ours            & \multicolumn{1}{c|}{Init}            & \multicolumn{1}{c|}{CoLaR}                 & Ours                      \\ \hline
(200,200,200)              & 0.249 & \textbf{0.071} & 0.079          & 0.246 & 0.096          & \textbf{0.081} & \multicolumn{1}{c|}{(0.8823,0.6879)} & \multicolumn{1}{c|}{(0.8954,0.7767)}          & \textbf{(0.8965,0.7773)} \\
(300,200,200)              & 0.275 & 0.032          & 0.032          & 0.187 & \textbf{0.022} & 0.027          & \multicolumn{1}{c|}{(0.8711,0.7475)} & \multicolumn{1}{c|}{(0.8925,0.8101)}          & (0.8941,0.8100)          \\
(400,200,200)              & 0.211 & 0.027          & \textbf{0.022} & 0.237 & \textbf{0.022} & 0.030          & \multicolumn{1}{c|}{(0.8517,0.7470)} & \multicolumn{1}{c|}{(0.8962,0.7807)}          & (0.8966,0.7791)          \\
(300,400,400)              & 0.250 & \textbf{0.040} & 0.043          & 0.394 & 0.039          & \textbf{0.038} & \multicolumn{1}{c|}{(0.8525,0.6726)} & \multicolumn{1}{c|}{\textbf{(0.8953,0.7804)}} & (0.8952,0.7796)          \\ \hline
\end{tabular}}
\end{table}

\begin{table}[tbp]
\label{table:3}
\caption{ Performance for our method and  CoLaR in case (3) }
\vskip 10pt

\centering
\setlength{\tabcolsep}{0.4mm}{
\begin{tabular}{ccccccccccc}
\hline                                                                                                                                                                                                                                                                                                                                                                          \hline
\multicolumn{1}{c|}{\multirow{2}{*}{$\sigma$}}                     & \multirow{2}{*}{$(n,p,q)$}              & \multicolumn{3}{|c|}{lossu}                                                                             & \multicolumn{3}{c|}{lossv}                                                                             & \multicolumn{3}{c}{$(\rho_1,\rho_2)$}                                                                                     \\ \cline{3-11}
           \multicolumn{1}{c|}{}        &      & \multicolumn{1}{|c|}{Init}  & \multicolumn{1}{c|}{CoLaR}       & \multicolumn{1}{c|}{Ours}            & \multicolumn{1}{c|}{Init}  & \multicolumn{1}{c|}{CoLaR}       & \multicolumn{1}{c|}{Ours}            & \multicolumn{1}{c|}{Init}            & \multicolumn{1}{c|}{CoLaR}                 & Ours                      \\ \hline
\multicolumn{1}{c|}{\multirow{3}{*}{0.3}} & \multicolumn{1}{c|}{(200,200,200)} & \multicolumn{1}{c|}{0.411} & \multicolumn{1}{c|}{0.111}          & \multicolumn{1}{c|}{\textbf{0.085}} & \multicolumn{1}{c|}{0.760} & \multicolumn{1}{c|}{0.153}          & \multicolumn{1}{c|}{\textbf{0.097}} & \multicolumn{1}{c|}{(0.8555,0.6786)} & \multicolumn{1}{c|}{(0.8846,0.7845)}          & 0.8880,0.7950            \\
\multicolumn{1}{c|}{}                     & \multicolumn{1}{c|}{(300,200,200)} & \multicolumn{1}{c|}{0.384} & \multicolumn{1}{c|}{0.082}          & \multicolumn{1}{c|}{\textbf{0.059}} & \multicolumn{1}{c|}{0.366} & \multicolumn{1}{c|}{0.069}          & \multicolumn{1}{c|}{\textbf{0.042}} & \multicolumn{1}{c|}{(0.8394,0.5961)} & \multicolumn{1}{c|}{(0.8878,0.7686)}          & \textbf{(0.8941,0.7732)} \\
\multicolumn{1}{c|}{}                     & \multicolumn{1}{c|}{(400,200,200)} & \multicolumn{1}{c|}{0.284} & \multicolumn{1}{c|}{0.040}          & \multicolumn{1}{c|}{\textbf{0.035}} & \multicolumn{1}{c|}{0.460} & \multicolumn{1}{c|}{0.056}          & \multicolumn{1}{c|}{\textbf{0.040}} & \multicolumn{1}{c|}{(0.8034,0.7434)} & \multicolumn{1}{c|}{(0.8890,0.7907)}          & (0.8918,0.7870)          \\
\multicolumn{1}{c|}{}                     & \multicolumn{1}{c|}{(300,400,400)} & \multicolumn{1}{c|}{0.416} & \multicolumn{1}{c|}{\textbf{0.050}} & \multicolumn{1}{c|}{0.055}          & \multicolumn{1}{c|}{0.856} & \multicolumn{1}{c|}{0.142}          & \multicolumn{1}{c|}{\textbf{0.104}} & \multicolumn{1}{c|}{(0.7650,0.6883)} & \multicolumn{1}{c|}{(0.8667,0.7874)}          & (0.8737,0.7812)          \\ \hline
\multicolumn{1}{c|}{\multirow{3}{*}{0.5}} & \multicolumn{1}{c|}{(200,200,200)} & \multicolumn{1}{c|}{0.681} & \multicolumn{1}{c|}{0.473}          & \multicolumn{1}{c|}{\textbf{0.187}} & \multicolumn{1}{c|}{1.608} & \multicolumn{1}{c|}{0.390}          & \multicolumn{1}{c|}{\textbf{0.278}} & \multicolumn{1}{c|}{(0.8250,0.5239)} & \multicolumn{1}{c|}{(0.8880,0.6763)}          & (0.8849,0.7199)          \\
\multicolumn{1}{c|}{}                     & \multicolumn{1}{c|}{(300,200,200)} & \multicolumn{1}{c|}{0.365} & \multicolumn{1}{c|}{0.074}          & \multicolumn{1}{c|}{\textbf{0.046}} & \multicolumn{1}{c|}{0.505} & \multicolumn{1}{c|}{0.280}          & \multicolumn{1}{c|}{\textbf{0.161}} & \multicolumn{1}{c|}{(0.8584,0.6357)} & \multicolumn{1}{c|}{(0.8820,0.7840)}          & \textbf{(0.8930,0.7887)} \\
\multicolumn{1}{c|}{}                     & \multicolumn{1}{c|}{(400,200,200)} & \multicolumn{1}{c|}{0.621} & \multicolumn{1}{c|}{0.081}          & \multicolumn{1}{c|}{\textbf{0.054}} & \multicolumn{1}{c|}{0.627} & \multicolumn{1}{c|}{0.101}          & \multicolumn{1}{c|}{\textbf{0.056}} & \multicolumn{1}{c|}{(0.8196,0.7204)} & \multicolumn{1}{c|}{(0.8790,0.7720)}          & \textbf{(0.8852,0.7810)} \\
\multicolumn{1}{c|}{}                     & \multicolumn{1}{c|}{(300,400,400)} & \multicolumn{1}{c|}{1.032} & \multicolumn{1}{c|}{0.732}          & \multicolumn{1}{c|}{\textbf{0.238}} & \multicolumn{1}{c|}{1.082} & \multicolumn{1}{c|}{0.287}          & \multicolumn{1}{c|}{\textbf{0.134}} & \multicolumn{1}{c|}{(0.8480,0.5122)} & \multicolumn{1}{c|}{(0.8512,0.7277)}          & \textbf{(0.8762,0.7597)} \\ \hline
\multicolumn{1}{c|}{\multirow{3}{*}{0.8}} & \multicolumn{1}{c|}{(200,200,200)} & \multicolumn{1}{c|}{2.141} & \multicolumn{1}{c|}{1.524}          & \multicolumn{1}{c|}{\textbf{0.169}} & \multicolumn{1}{c|}{2.226} & \multicolumn{1}{c|}{1.738}          & \multicolumn{1}{c|}{\textbf{0.407}} & \multicolumn{1}{c|}{(0.7891,0.2296)} & \multicolumn{1}{c|}{(0.7850,0.2490)}          & \textbf{(0.8519,0.7941)} \\
\multicolumn{1}{c|}{}                     & \multicolumn{1}{c|}{(300,200,200)} & \multicolumn{1}{c|}{1.511} & \multicolumn{1}{c|}{1.668}          & \multicolumn{1}{c|}{\textbf{0.328}} & \multicolumn{1}{c|}{2.140} & \multicolumn{1}{c|}{0.738}          & \multicolumn{1}{c|}{\textbf{0.059}} & \multicolumn{1}{c|}{(0.7275,0.4971)} & \multicolumn{1}{c|}{(0.7841,0.6782)}          & \textbf{(0.8877,0.7679)} \\
\multicolumn{1}{c|}{}                     & \multicolumn{1}{c|}{(400,200,200)} & \multicolumn{1}{c|}{1.186} & \multicolumn{1}{c|}{0.356}          & \multicolumn{1}{c|}{\textbf{0.085}} & \multicolumn{1}{c|}{1.207} & \multicolumn{1}{c|}{0.497}          & \multicolumn{1}{c|}{\textbf{0.209}} & \multicolumn{1}{c|}{(0.7793,0.6807)} & \multicolumn{1}{c|}{(0.8113,0.7551)}          & \textbf{(0.8926,0.7991)} \\
\multicolumn{1}{c|}{}                     & \multicolumn{1}{c|}{(300,400,400)} & \multicolumn{1}{c|}{1.408} & \multicolumn{1}{c|}{1.100}          & \multicolumn{1}{c|}{\textbf{0.282}} & \multicolumn{1}{c|}{1.966} & \multicolumn{1}{c|}{0.861}          & \multicolumn{1}{c|}{\textbf{0.208}} & \multicolumn{1}{c|}{(0.7105,0.2885)} & \multicolumn{1}{c|}{(0.6924,0.3386)}          & \textbf{(0.8101,0.7031)} \\ \hline
\end{tabular}}
\end{table}

\section{Conclusions}\label{sec-con}

In this paper, we presented a matrix variant of trace Lasso regularization, and proposed an adaptive sparse CCA model by incorporating the trace Lasso regularization into CCA problem. The proposed model can well cope with the situation in which the data set are correlate.  The adaptive sparse CCA  is further reformulated to an optimization problem on Riemannian manifolds, and a manifold inexact augmented Lagrangian method is then proposed for solving the resulting  optimization problem. Note that the proposed manifold inexact augmented Lagrangian method can be used to solve the general manifold constrained optimization problem: $\min_{U\in\mathcal{M}_1,V\in\mathcal{M}_2}\{f(U,V) + g(U) + h(V)\}$, where $g$ and $h$ is may nonsmooth. Then, An Riemannian Barzilai-Borwein gradient method is adopted for the iteration subproblem of the proposed method, and the global convergence is established under some mild assumptions.   We show that adaptive sparse CCA can significantly improve the performance compared with the  $\ell_1$-regularization based sparse CCA technique (CoLaR) in different simulation settings.

\vskip 10pt

\bibliographystyle{abbrv}
\bibliography{SCCA_Ref}
\newpage
\begin{appendix}

\section{Riemannian submanifold}
In this section, we introduce some necessary concepts and definitions of Riemannian optimization, we refer the reader to \cite{AbsMahSep2008} for more details. A $d$-dimensional smooth manifold $\mathcal{M}$ is a Hausdorff and second-countable topological space, where each point has a neighborhood being  locally homeomorphic to the $d$-dimensional Euclidean space  via a
family of charts, and the transition maps of intersecting charts are differentiable. At each point $x\in\mathcal{M}$, a tangent vector $\xi_x$ is defined as a mapping such that there exists a curve $\gamma$ on $\mathcal{M}$ with $\gamma(0) = x$, and satisfying
\[\nn
\xi_x f:=\dot{\gamma}(0)f = \left.\frac{d(f(\gamma(t)))}{dt}\right|_{t=0}
\]
for all $f\in\Im_x\mathcal{M}$. The tangent space $T_x\mathcal{M}$ to $\mathcal{M}$ at $x$ is defined as the set of all tangent vectors at   $x$.  A  Riemannian manifold $(\mathcal{M}, \left< \right>)$ is a real smooth manifold that quipped with a smoothly varying inner product $\left< \right>_x$ on the tangent space $T_x\mathcal{M}$ of each point $x\in\mathcal{M}$. When $\mathcal{M}$ is a Riemannian submanifold of   Euclidean space $\mathcal{E}$, the inner product is defined as Euclidean inner product:$\left<\eta_x,\xi_x\right>_x = \left<\eta_x,\xi_x\right>$, and the corresponding norm induced by the Riemannian metric is given by $\|\xi_x\|_x = \|\xi_x\|$.  A retraction on  manifold $\mathcal{M}$ is a smooth mapping $R: T\mathcal{M}\rightarrow \mathcal{M}$ with the following properties. Let $R_x: T_x\mathcal{M} \rightarrow \mathcal{M}$  be the restriction of  $R$ to $T_x\mathcal{M}$:
\begin{itemize}
  \item $R_x(0_x) = x$, where $0_x$ is zero element of $T_x\mathcal{M}$
  \item $dR_x(0_x) = id_{T_x\mathcal{M}}$,  where $id_{T_x\mathcal{M}}$ is the identity mapping on $T_x\mathcal{M}$
\end{itemize}
To compare two tangent vector at different points, we need  vector transport. The vector transport $\mathcal{T}$ is a smooth mapping with
  $
    T\mathcal{M}\oplus T\mathcal{M} \rightarrow T\mathcal{M}:(\eta_x,\xi_x)\mapsto \mathcal{T}_{\eta_x}(\xi_x)\in T\mathcal{M}
  $
  for any $x\in\mathcal{M}$, where $\mathcal{T}$ satisfies
  \begin{itemize}
    \item for any $\xi_x\in T_x\mathcal{M}$, $\mathcal{T}_{0_x}\xi_x = \xi_x$,
    \item $\mathcal{T}_{\eta_x}(a\xi_x + b\zeta_x)  = a \mathcal{T}_{\eta_x}(\xi_x) + b\mathcal{T}_{\eta_x}(\zeta_x)$.
  \end{itemize}
  Vector transport preserves inner products, i.e. $\left<\mathcal{T}_{\eta_x}(\xi_x),\mathcal{T}_{\eta_x}(\zeta_x)\right>_x = \left<\xi_x,\zeta_x\right>_x$.

\section{Proximal operator and retraction-smooth}
For a proper, convex and low semicontinuous function $g:\mathcal{E}\rightarrow\mathbb{R}$, the proximal operator with  parameter $\mu\geq 0$, denoted by $\mbox{prox}_{\mu g}$,  is defined by
\begin{equation}\label{proximal}
  \mbox{prox}_{\mu g} (v)  :=\arg\min_x \{g(x) + \frac{1}{2\mu} \|x-v\|^2\}.
\end{equation}
 The associated Moreau envelope is defined as a function $M_{\mu g}: \mathcal{E}\rightarrow\mathbb{R}$, given by
\begin{equation}\label{More}
\begin{split}
  M_{\mu g} (v) :& = \min_x \{g(x) + \frac{1}{2\mu} \|x-v\|^2\} \\
                 & = g(\mbox{prox}_{\mu g} (v)) + \frac{1}{2\mu} \|\mbox{prox}_{\mu g} (v)-v\|^2.
  \end{split}
\end{equation}
Note that the Moreau envelope is a continuously differentiable function, even when $g$ is not. The following lemma states this fact.
\begin{lemma}[Theorem 6.60 in \cite{beck2017first}]\label{lem-morea}
  Let $g:\mathcal{E}\rightarrow\mathbb{R}$ be a proper closed and convex function, $\mu\ge 0$. Then $M_{\mu g}$ is $\frac{1}{\mu}$-smooth over $\mathcal{E}$, and for any $v\in\mathcal{E}$ we have
   \begin{equation}\label{Morediff}
  \nabla M_{\mu g}(v) = \frac{1}{\mu}(v - \mbox{prox}_{\mu g} (v)).
\end{equation}
\end{lemma}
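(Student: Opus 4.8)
The plan is to treat this as a standard result from convex analysis and establish it in four steps: well-posedness of the proximal map, its firm nonexpansiveness, the gradient formula, and the Lipschitz bound on the gradient (assuming $\mu>0$, as needed for $\tfrac{1}{\mu}$ to appear). Throughout I write $p(v):=\mbox{prox}_{\mu g}(v)$ for the minimizer in the definition of $M_{\mu g}(v)$. First I would note that the objective $x\mapsto g(x)+\frac{1}{2\mu}\|x-v\|^2$ is proper, closed, and strongly convex with modulus $1/\mu$ (the quadratic term supplies strong convexity, and $g$ is convex), so it admits a unique minimizer; hence $p$ is single-valued and $M_{\mu g}$ is finite-valued. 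The first-order optimality condition then reads $\frac{1}{\mu}(v-p(v))\in\partial g(p(v))$, which I will use repeatedly.

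The second step is firm nonexpansiveness of $p$. Taking two points $v_1,v_2$ with images $p_1,p_2$ and applying monotonicity of $\partial g$ to the inclusions $\frac{1}{\mu}(v_i-p_i)\in\partial g(p_i)$ gives $\langle (v_1-v_2)-(p_1-p_2),\,p_1-p_2\rangle\ge 0$, i.e. $\langle v_1-v_2,\,p_1-p_2\rangle\ge\|p_1-p_2\|^2$. In particular $p$ is $1$-Lipschitz by Cauchy--Schwarz. This estimate is exactly what is needed later to both pin down the gradient and bound its Lipschitz constant, so I develop it up front.

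The crux is the gradient formula, which I would prove by a two-sided sandwich yielding Fr\'echet differentiability directly. For the upper bound, I feed the suboptimal point $p(v)$ into the minimization defining $M_{\mu g}(w)$ and expand the square, obtaining $M_{\mu g}(w)-M_{\mu g}(v)\le\langle\frac{1}{\mu}(v-p(v)),\,w-v\rangle+\frac{1}{2\mu}\|w-v\|^2$. For the matching lower bound, I feed $p(w)$ into the minimization defining $M_{\mu g}(v)$; after the same expansion a cross term $\frac{1}{\mu}\langle p(w)-p(v),\,v-w\rangle$ appears, which I control using the $1$-Lipschitz bound from the previous step, giving $M_{\mu g}(w)-M_{\mu g}(v)\ge\langle\frac{1}{\mu}(v-p(v)),\,w-v\rangle-\frac{1}{2\mu}\|w-v\|^2$. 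Combining the two yields $\bigl|M_{\mu g}(w)-M_{\mu g}(v)-\langle\frac{1}{\mu}(v-p(v)),\,w-v\rangle\bigr|\le\frac{1}{2\mu}\|w-v\|^2=o(\|w-v\|)$, which is precisely Fr\'echet differentiability with $\nabla M_{\mu g}(v)=\frac{1}{\mu}(v-p(v))$, matching \eqref{Morediff}.

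Finally, the $\tfrac{1}{\mu}$-smoothness follows from the gradient formula: $\|\nabla M_{\mu g}(v_1)-\nabla M_{\mu g}(v_2)\|=\frac{1}{\mu}\|(v_1-v_2)-(p_1-p_2)\|$, and expanding the square while invoking $\langle v_1-v_2,\,p_1-p_2\rangle\ge\|p_1-p_2\|^2$ gives $\|(v_1-v_2)-(p_1-p_2)\|^2\le\|v_1-v_2\|^2-\|p_1-p_2\|^2\le\|v_1-v_2\|^2$, so the gradient is $\tfrac{1}{\mu}$-Lipschitz. I expect the main obstacle to be the third step: establishing the \emph{exact} gradient rather than a mere directional derivative requires the careful two-sided estimate, and cancelling the cross term in the lower bound relies on the firm nonexpansiveness of the prox, which is why that property is proved beforehand.
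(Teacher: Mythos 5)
Your proof is correct. Note that the paper itself contains no proof of this lemma: it is stated purely as a citation of Theorem 6.60 in Beck's \emph{First-Order Methods in Optimization}, so there is no in-paper argument to compare against; your write-up actually supplies what the paper omits. Checking your steps: (i) strong convexity (modulus $1/\mu$) of $x\mapsto g(x)+\frac{1}{2\mu}\|x-v\|^2$ does give a unique minimizer and the optimality inclusion $\frac{1}{\mu}(v-p(v))\in\partial g(p(v))$; (ii) monotonicity of $\partial g$ applied to the two inclusions does yield $\langle v_1-v_2,\,p_1-p_2\rangle\ge\|p_1-p_2\|^2$ and hence $1$-Lipschitzness of the prox; (iii) plugging $p(v)$ into the problem defining $M_{\mu g}(w)$ and $p(w)$ into the problem defining $M_{\mu g}(v)$, and bounding the cross term $\frac{1}{\mu}\langle p(w)-p(v),\,v-w\rangle\ge-\frac{1}{\mu}\|w-v\|^2$ by Cauchy--Schwarz and (ii), does sandwich $M_{\mu g}(w)-M_{\mu g}(v)-\langle\frac{1}{\mu}(v-p(v)),\,w-v\rangle$ between $\pm\frac{1}{2\mu}\|w-v\|^2$, which is exactly Fr\'echet differentiability with gradient \eqref{Morediff}; (iv) expanding $\|(v_1-v_2)-(p_1-p_2)\|^2$ and invoking (ii) gives the $\frac{1}{\mu}$-Lipschitz bound on the gradient. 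Your route (firm nonexpansiveness plus a two-sided sandwich) is the elementary, self-contained one; the cited textbook treatment instead leans on duality machinery, viewing $M_{\mu g}$ as the infimal convolution $g\,\square\,\frac{1}{2\mu}\|\cdot\|^2$, whose conjugate $g^*+\frac{\mu}{2}\|\cdot\|^2$ is $\mu$-strongly convex, so that the strong-convexity/smoothness conjugate correspondence delivers $\frac{1}{\mu}$-smoothness at once. The duality proof is shorter given that background; yours requires nothing beyond monotonicity of the subdifferential, at the cost of the careful cross-term estimate you flag. One repair belongs to the statement rather than to your proof: the hypothesis should read $\mu>0$ (the paper's $\mu\ge 0$ is a typo, since neither $\mbox{prox}_{\mu g}$ nor $\frac{1}{\mu}$ makes sense at $\mu=0$), which you correctly assume at the outset.
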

Lemma \eqref{lem-morea} states that the Moreau envelope is a continuously differentiable function over Euclidean space $\mathcal{E}$. The next results show the relationship between Retraction smoothness in a submanifold of Euclidean space and smoothness in Euclidean space.

\begin{definition}[Retr-Smooth]
  A function $f:\mathcal{M}\rightarrow\mathbb{R}$ is said to be retraction $L$-smooth if for any $x,y\in\mathcal{M}$,
  \begin{equation}
    f(y) \leq f(x) + \left<\mbox{grad}f(x), \xi\right>_x + \frac{L}{2}\|\xi\|_x^2
  \end{equation}
  where $\xi\in T_x\mathcal{M}$ and $y =R_x(\xi)$.
\end{definition}

Let $\mathcal{M}$ be a Riemannian submanifold of $\mathcal{E}$. The following lemma states that if $f:\mathbb{R}^n \rightarrow \mathbb{R}$ have Lipschitz continuous gradient, then $f$ is also retraction smooth.

\begin{lemma}[Lemma 4 in \cite{Boumal2016Global}]\label{retract-euclidean}
  Let $\mathcal{E}$ be a Euclidean space (for example, $\mathcal{E} = \mathbb{R}^n$) and let $\mathcal{M}$ be a compact Riemannian submanifold of $\mathcal{E}$. Let $Retr$ be a retraction on $\mathcal{M}$. if $f:\mathcal{E}\rightarrow \mathbb{R}$ has Lipschitz continuous gradient in the convex hull of $\mathcal{M}$, then there exists a positive constant $L_g$  such that  the pullbacks $f\circ \mbox{Retr}_x$ satisfies
  \begin{equation}
    f(\mbox{Retr}_{x}(\eta)) \leq f(x) + \left<\eta,\mbox{grad}f(x)\right> + \frac{L_g}{2}\|\eta\|^2
  \end{equation}
  for all $\eta\in T_{x}\mathcal{M}$.
\end{lemma}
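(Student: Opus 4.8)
The plan is to combine the Euclidean descent lemma on the convex hull of $\mathcal{M}$ with a uniform second-order Taylor estimate for the retraction, and then to treat large and small tangent vectors separately, using compactness of $\mathcal{M}$ to extract a single global constant $L_g$. First I would record three ingredients. Since $f$ has $L$-Lipschitz gradient on the convex hull of $\mathcal{M}$, and both $x$ and $\mbox{Retr}_x(\eta)$ lie in $\mathcal{M}$ (so the segment joining them lies in that convex hull), the standard descent lemma gives
\[
f(\mbox{Retr}_x(\eta)) \leq f(x) + \left<\nabla f(x), \mbox{Retr}_x(\eta) - x\right> + \frac{L}{2}\|\mbox{Retr}_x(\eta) - x\|^2.
\]
Because $\mathcal{M}$ is a Riemannian submanifold, the Riemannian gradient is the orthogonal projection of the Euclidean one, $\mbox{grad}f(x) = P_{T_x\mathcal{M}}(\nabla f(x))$, so for $\eta\in T_x\mathcal{M}$ we have $\left<\nabla f(x),\eta\right> = \left<\mbox{grad}f(x),\eta\right>$. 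Finally, from $\mbox{Retr}_x(0)=x$ and $d\mbox{Retr}_x(0)=\mbox{id}$ together with smoothness of the retraction and compactness of $\mathcal{M}$, I would extract a uniform quadratic remainder bound: there exist $\delta>0$ and $c>0$ with $\|\mbox{Retr}_x(\eta)-x-\eta\| \leq c\|\eta\|^2$ for all $x\in\mathcal{M}$ and all $\|\eta\|\leq\delta$.

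For the regime $\|\eta\|\leq\delta$ I would substitute these estimates into the descent lemma. Splitting $\left<\nabla f(x),\mbox{Retr}_x(\eta)-x\right> = \left<\mbox{grad}f(x),\eta\right> + \left<\nabla f(x),\mbox{Retr}_x(\eta)-x-\eta\right>$, the last term is bounded by $Gc\|\eta\|^2$ via Cauchy--Schwarz, where $G:=\max_{x\in\mathcal{M}}\|\nabla f(x)\|$ is finite by compactness. Likewise $\|\mbox{Retr}_x(\eta)-x\| \leq (1+c\delta)\|\eta\|$, so $\|\mbox{Retr}_x(\eta)-x\|^2 \leq (1+c\delta)^2\|\eta\|^2$. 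Collecting terms yields the claimed inequality with the constant $2Gc + L(1+c\delta)^2$ on this region.

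For the regime $\|\eta\|>\delta$ I would exploit compactness directly rather than any expansion: since $\mbox{Retr}_x(\eta)\in\mathcal{M}$, the left-hand side is bounded above by $M_f:=\max_{\mathcal{M}}f$, whereas the right-hand side is at least $f_{\min} - G\|\eta\| + \frac{L_g}{2}\|\eta\|^2$ with $f_{\min}:=\min_{\mathcal{M}}f$. Choosing $L_g$ large enough that the scalar quadratic $\frac{L_g}{2}t^2 - Gt + (f_{\min}-M_f)$ is nonnegative for every $t\geq\delta$ forces the inequality to hold automatically there. Taking $L_g$ to be the maximum of this constant and the one from the small-$\eta$ region produces a single positive constant valid for all $\eta\in T_x\mathcal{M}$, completing the argument.

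The main obstacle is the uniform second-order estimate for the retraction: the remainder bound $\|\mbox{Retr}_x(\eta)-x-\eta\|\leq c\|\eta\|^2$ must hold with constants $c,\delta$ \emph{independent} of the base point $x$, which is precisely where compactness of $\mathcal{M}$ and smoothness of $\mbox{Retr}$ on the relevant compact portion of the tangent bundle are essential. The large-$\eta$ regime is the other place compactness is indispensable, since only boundedness of $f$ over $\mathcal{M}$ allows the quadratic right-hand side to dominate the bounded left-hand side.
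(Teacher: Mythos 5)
Your proof is correct. The paper itself offers no proof of this lemma---it is imported verbatim as Lemma 4 of \cite{Boumal2016Global}---so the only meaningful comparison is with that reference, and your argument is essentially the one given there: the Euclidean descent lemma on the convex hull of $\mathcal{M}$, the projection identity $\left<\nabla f(x),\eta\right> = \left<\mbox{grad}f(x),\eta\right>$ for $\eta\in T_x\mathcal{M}$, and compactness-based second-order estimates for the retraction. The only cosmetic difference is that the reference packages the case analysis into global bounds $\|\mbox{Retr}_x(\eta)-x\|\leq\alpha\|\eta\|$ and $\|\mbox{Retr}_x(\eta)-x-\eta\|\leq\beta\|\eta\|^2$ valid for all $\eta$ (themselves obtained from the same small/large-vector dichotomy plus boundedness of $\mathcal{M}$), whereas you keep the two regimes explicit and dispose of the large-$\|\eta\|$ regime via boundedness of $f$ on $\mathcal{M}$; both routes hinge on exactly the two compactness facts you single out as essential.
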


\end{appendix}

\end{document}